\documentclass[a4paper,reqno,11pt]{amsart}
\usepackage[a4paper,margin=2.5cm]{geometry}
\usepackage[T1]{fontenc}
\usepackage[utf8]{inputenc}
\usepackage[english]{babel}
\usepackage{amssymb,amsmath,amsthm,mathtools}
\usepackage[dvipsnames,svgnames,table]{xcolor}
\usepackage{graphicx}
\usepackage{caption,subcaption}
\usepackage{lmodern}
\usepackage[foot]{amsaddr}
\usepackage[shortlabels]{enumitem}
\usepackage[unicode=true]{hyperref}
\usepackage[capitalise,noabbrev]{cleveref}
\usepackage{thmtools,thm-restate}
\usepackage{comment}
\usepackage{mdframed}
\usepackage{algorithmicx}
\usepackage[noend]{algpseudocode}

\newcommand{\q}[1]{``#1''}
\newcommand{\defin}[1]{\emph{\textcolor{ForestGreen}{#1}}}
\newcommand{\defmath}[1]{\defin{\text{$#1$}}}

\newcommand{\calC}{\mathcal{C}}

\DeclareMathOperator{\dist}{dist}

\DeclareMathOperator{\depth}{depth}

\DeclareMathOperator{\pw}{pw}

\let\le\leqslant
\let\ge\geqslant
\let\leq\leqslant
\let\geq\geqslant

\let\subset\subseteq

\let\epsilon\varepsilon

\let\setminus\backslash

\crefformat{equation}{#2(#1)#3}
\crefformat{subsection}{Subsection #2#1#3}

\makeatletter
\def\thm@space@setup{
  \thm@preskip=3mm
  \thm@postskip=0mm
}
\makeatother

\algdef{SE}[DOWHILE]{Do}{doWhile}{\algorithmicdo}[1]{\algorithmicwhile\ #1}

\mdfdefinestyle{dontsplit}{
  hidealllines=true,
  nobreak=true,
  leftmargin=0pt,
  rightmargin=0pt,
  innerleftmargin=0pt,
  innerrightmargin=0pt,
}

\newmdtheoremenv[style=dontsplit]{theorem}{Theorem}
\newmdtheoremenv[style=dontsplit]{lemma}[theorem]{Lemma}
\newmdtheoremenv[style=dontsplit]{observation}[theorem]{Observation}
\newmdtheoremenv[style=dontsplit]{proposition}[theorem]{Proposition}
\newmdtheoremenv[style=dontsplit]{question}[theorem]{Question} 
\newmdtheoremenv[style=dontsplit]{corollary}[theorem]{Corollary} 
\newmdtheoremenv[style=dontsplit]{problem}[theorem]{Problem}
\newmdtheoremenv[style=dontsplit]{conjecture}[theorem]{Conjecture}

\newtheorem*{problem*}{Problem}
\newtheorem*{conjecture*}{Conjecture}
\newtheorem*{remark*}{Remark}
\newtheorem*{question*}{Question} 

\theoremstyle{remark}
\newmdtheoremenv[style=dontsplit]{claim}[theorem]{Claim}

\crefname{claim}{Claim}{Claims}

\newtheorem*{claim*}{Claim}

\graphicspath{{figs/}}

\setlist[enumerate,1]{label=\textup{(\roman*)}}

\makeatletter
\newcommand{\myitem}[1]{%
\item[#1]\protected@edef\@currentlabel{#1}%
}
\makeatother

\hypersetup{
    colorlinks,
    linkcolor={RoyalBlue},
    citecolor={RubineRed},
    urlcolor={blue!80!black},
    pdftitle={Row pathwidth of complete binary trees}
}

\DeclareMathOperator{\rpw}{rpw}

\begin{document}

\title[Row pathwidth of complete binary trees]
{Row pathwidth of complete binary trees}

\author[J.~Hodor]{J\k{e}drzej Hodor}
\address[J.~Hodor]{Theoretical Computer Science Department,
Faculty of Mathematics and Computer Science, Doctoral School of Exact and Natural Sciences,
Jagiellonian University, Krak\'ow, Poland}
\email{\href{mailto:jedrzej.hodor@gmail.com}{jedrzej.hodor@gmail.com}}

\author[P.~Micek]{Piotr Micek}
\address[P.~Micek]{Theoretical Computer Science Department, Faculty of Mathematics and Computer Science, Jagiellonian University, Kraków, Poland.}
\email{\href{mailto:piotr.micek@uj.edu.pl}{piotr.micek@uj.edu.pl}}
\thanks{The authors were partially supported by the National Science Center, Poland,  under grant UMO-2023/05/Y/ST6/00079 within the WEAVE-UNISONO program.}
\begin{abstract}
We show that if a complete binary tree of height $h$ is isomorphic to a subgraph of the strong product of a graph $H$ and a path, then $\pw(H)$ is $\Omega(h)$.
This solves a problem posed by Bose, Dujmović, Javarsineh, Morin, and Wood (2022). 
The proof was found by OpenAI's GPT-5.6 Sol Pro.
\end{abstract}

\maketitle

\section{Introduction}\label{sec:introduction}

%\piotr{Write somewhere that we do not optimize the multiplicative constant.}

%For graphs $A$ and $B$, let \defin{$A\boxtimes B$} denote their strong product.
The \defin{row pathwidth} of a graph $G$, denoted \defin{$\rpw(G)$}, is the minimum pathwidth of a graph $H$ such that $G$ is isomorphic to a subgraph of the strong product of $H$ and a path.
Pathwidth and the strong product are defined in \cref{sec:proof}.
Let \defin{$T_h$} denote the complete binary tree of height~$h$, i.e., the binary tree with $2^h$ leaves and all root-to-leaf paths of length $h$.
The main result of this paper is the following.

\begin{theorem}\label{thm:main}
For every nonnegative integer $h$,
\[
    \left\lfloor(h+1)/16\right\rfloor
    \le \rpw(T_h).
\]
\end{theorem}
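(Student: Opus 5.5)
We argue by induction on $h$, proving a stronger statement that can be carried through the recursion. Fix an embedding of $T_h$ into $H\boxtimes P$, where $P$ is a path with vertices $1,\dots,m$, and a path decomposition $(B_1,\dots,B_n)$ of $H$ of width $k$. Each tree vertex $v$ is mapped to a pair $(x(v),y(v))$. Adjacent tree vertices satisfy $x(u)x(v)\in E(H)$ or $x(u)=x(v)$, and $|y(u)-y(v)|\le 1$. We think of the product as a grid whose coordinates are (bag index, row). For each tree vertex $v$, let $I(v)\subseteq\{1,\dots,n\}$ be the interval of bags containing $x(v)$. Adjacent tree vertices then have intersecting intervals and rows differing by at most $1$. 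So a connected subtree $S$ sweeps a connected region: its bag-span $\bigcup_{v\in S} I(v)$ is an interval, and its row-span is an interval.

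The key quantity is a \emph{load}: the number of distinct $H$-vertices that occur in a single bag $B_i$ among images of the tree. The tree has exponentially many vertices, but each $H$-vertex can host only a limited number of tree vertices in a window of rows. If a subtree of height $h'$ lies in rows within distance $d$ of each other, then some $H$-vertex is used at most $2d+1$ times. The strengthened claim I aim to prove by induction is the following. If $T_h$ embeds with all its vertices meeting a common "cut" (for instance, every root-to-leaf path crosses a fixed bag index $i$, or the whole image lies in a row window), then some bag contains at least $\lfloor (h+1)/c\rfloor$ distinct $H$-vertices used by the embedding, or a similar separating set.

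The induction step is a Ramsey/Helly-type argument on the two subtrees of the root, or on the four grandchildren subtrees. Consider the bag-spans $J_1,J_2$ of the two child subtrees, which are intervals of bag indices. If they overlap substantially, for example if both contain an index $i$ where each subtree "crosses", then $B_i$ must contain separate vertices from both. The aim is to find, within a constant number of levels (here $16$, possibly via a depth-$4$ descent giving $16$ subtrees, or via losing a constant number of levels for the row direction), a subtree whose image avoids the previously counted $H$-vertices while still crossing the same bag. This increases the load of $B_i$ by one. If they do not overlap, a separation argument applies: one child subtree is entirely to the left, so the root-to-leaf paths of the other... The second case is where rows matter. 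A subtree of height $h'$ confined to few bags has at most $(k+1)\cdot|\text{rows}|$ image slots in each bag range. Exponential growth forces either many rows, which are in turn bounded by the distance, i.e., at most $h'$, or a wide bag-span. We exploit this tension: $2^{h'}$ vertices inside a row window of length $2h'+1$ need $\ge 2^{h'}/(2h'+1)$ distinct $H$-vertices, and these spread over many bags. Hence paths of the tree must cross many bag boundaries, and each boundary crossing contributes a vertex to a bag.

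The main obstacle is designing the invariant so that the counted vertices in a single bag are genuinely distinct and persist. Pathwidth lower bounds for trees (Scheffler's $\pw(T_h)\approx h/2$) rest on "three branches, one avoids the middle bag". Here, the product with a path lets one $H$-vertex represent a whole vertical column, so branches can share $H$-vertices at different rows. I would first try to adapt the classical three-subtree argument. Among the $4$ grandchild subtrees (or $8$ at depth $3$), pick one whose bag-span avoids the bag where the others' separator sits, and whose row-span is disjoint from the others'. This gives the constant $16$ as the cost of handling both the bag ordering and row collisions. Verifying that the row-disjointness can be maintained, using $|y(u)-y(v)|\le 1$ to keep a subtree's rows near its root's row only up to its height, is the step I expect to be hardest.
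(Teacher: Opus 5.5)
\textbf{There is a genuine gap.} What you have written is a plan, not a proof. The invariant is never formulated: neither the hypothesis ``all vertices meeting a common cut'' nor the conclusion ``some bag contains $\lfloor (h+1)/c\rfloor$ distinct $H$-vertices, or a similar separating set'' is defined. The concrete steps you do propose also fail in $H\boxtimes P$:
\begin{itemize}
\item You claim that if both child subtrees cross bag $B_i$, then $B_i$ ``must contain separate vertices from both.'' This is false: the two subtrees can pass through $B_i$ using the same $H$-vertex in different rows. You acknowledge this obstruction later yourself.
\item Your remedy is to choose a grandchild subtree whose row-span is disjoint from the others'. This cannot be arranged in general. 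If $P$ has a single vertex, all row-spans coincide. More generally, subtrees of height $h'$ hanging at nearby vertices occupy overlapping windows of $2h'+1$ rows.
\item The counting fallback does not help. Showing that the embedding uses at least $2^{h'}/(2h'+1)$ distinct $H$-vertices bounds $|V(H)|$, not the size of any bag; a long path has pathwidth $1$. Many boundary crossings do not put many distinct vertices into a single bag.
\end{itemize}

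\textbf{The missing idea} is to induct on $k=\pw(H)$ rather than on $h$, and to replace the search for a large bag by a weighted count. Write each tree vertex as $v\mapsto(\pi(v),\lambda(v))$. For connected $H$ and $z=\pi(r)$, the paper takes $Y$ to be the union of shortest paths from $z$ to a vertex of the first bag and to a vertex of the last bag. Then $Y$ meets every bag, so $\pw(H-Y)\le k-1$, and $|Y\cap B_H(z,d)|\le 2d+1$.

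Your row-window intuition belongs exactly here. A depth-$d$ vertex $v$ has $\dist_H(\pi(v),z)\le d$ and $|\lambda(v)-\lambda(r)|\le d$. So at most $(2d+1)^2$ vertices of depth $d$ lie in $S=\{v:\pi(v)\in Y\}$. With $w(T)=\sum_v (3/4)^{\depth_T(v)}$ this gives $w(T,S)\le 388$.

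Each component of $T-S$ hangs below a vertex of $S$ (at most two per vertex, since $r\in S$) and lives over $H-Y$. Induction then gives $w(T)\le(1+2\cdot\tfrac34\cdot 388\cdot 583^{k-1})\,w(T,S)\le 388\cdot 583^k$. Since $w(T_h)\ge (3/2)^h$, one gets $h<15+16k$. The constant $16$ comes from $583<(3/2)^{16}$, not from descending four levels.
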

We make no effort to optimize the constant in~\cref{thm:main}.

We first give some brief context for our result. 
A central goal of the \emph{graph product structure theory} is to represent graphs as subgraphs of strong products of simpler graphs.
The theory was initiated by Dujmovi\'{c}, Joret, Micek, Morin, Ueckerdt, and Wood~\cite{DJMMUW20}, who proved that every planar graph is isomorphic to a subgraph of the strong product of a graph of treewidth at most $8$ and a path; in other words, planar graphs have \q{row treewidth} at most $8$.
This led to a broad range of product structure theorems and applications throughout structural and algorithmic graph theory.

%In this paper, we study the gap between row pathwidth and pathwidth of graphs. 
Complete binary trees are canonical obstructions to small pathwidth, i.e.\ $\pw(T_h)=\left\lceil h/2\right\rceil$ and as first shown in \emph{Graph Minors I} by Robertson and Seymour~\cite{GM1}, 
every graph of large pathwidth contains a subdivision of a large complete binary tree. 
What is the row pathwidth of a complete binary tree $T_h$? 
A result of Dvo\v{r}\'ak, Huynh, Joret, Liu, and Wood~\cite[Theorem~13]{DHJLW21} implies a lower bound $\Omega(h/\log h)$, while $\rpw(T_h)\le \pw(T_h)=\left\lceil h/2\right\rceil$. 
Determining the asymptotic order of $\rpw(T_h)$ was posed as an open problem by Bose, Dujmovi\'c, Javarsineh, Morin, and Wood~\cite{BDJMW22}. 
%A result of Dvo\v{r}\'ak, Huynh, Joret, Liu, and Wood~\cite[Theorem~13]{DHJLW21} implies the lower bound $\Omega(h/\log h)$, while $\rpw(T_h)\le \pw(T_h)=\left\lceil h/2\right\rceil$.
\cref{thm:main} gives $\rpw(T_h) \in \Theta(h)$.

%Here, we study, a somewhat negative side of the theory.
%In the pathwidth setting, Bose et al.~\cite{BDJMW22} proved that for every positive integer $k$, there exists a tree whose pathwidth and row pathwidth are both equal to $k$; their construction uses trees with sufficiently large branching.
%They then asked whether pathwidth and row pathwidth are also of the same order for complete binary trees.
%Equivalently, they asked whether the row pathwidth of a complete binary tree is linear in its height.
%We answer this question affirmatively.

\section{Proof}\label{sec:proof}
For a positive integer $n$, we write $\defmath{[n]} = \{1,\dots,n\}$. 

The \defin{strong product} of graphs~$A$ and~$B$, denoted by \defin{${A \boxtimes B}$}, is the graph with vertex-set ${V(A) \times V(B)}$, where distinct vertices ${(v,x),(w,y) \in V(A) \times V(B)}$ are adjacent if
${v=w}$ and ${xy \in E(B)}$, or
${x=y}$ and ${vw \in E(A)}$, or
${vw \in E(A)}$ and~${xy \in E(B)}$.

Let $H$ be a graph.
For two vertices $u$ and $v$ in $H$, the \defin{distance} from $u$ to $v$ in $H$, denoted \defin{$\dist_H(u,v)$}, is the minimum length of a path from $u$ to $v$ in $H$.
For a vertex $z\in V(H)$ and a nonnegative integer $d$,  we define the \defin{ball} of \defin{radius} $d$ \defin{centered} at $z$ in $H$ by $\defmath{B_H(z,d)}=\{x\in V(H):\dist_H(x,z)\le d\}$.
A sequence $(B_1,\dots,B_m)$ of subsets of the vertices of $H$ is a \defin{path decomposition} of $H$ if 
\begin{enumerate}
    \item for every $v \in V(H)$, the set $\{i \in [m] : v \in B_i\}$ is a nonempty interval in $[m]$, and
    \item for every $uv \in E(H)$, there exists $i \in [m]$ with $u,v \in B_i$.
\end{enumerate}
The \defin{width} of this path decomposition is $\max_{i \in [m]}|B_i| - 1$.
The \defin{pathwidth} of $H$, denoted by \defin{$\pw(H)$}, is the minimum width of a path decomposition of $H$.

We start with the following folklore observation:  
given a connected graph $H$, a path decomposition $(B_1,\ldots,B_m)$ of $H$ of width at most $k$, and a path $P$ in $H$ 
meeting both $B_1$ and $B_m$, 
we have that $\pw(H-V(P))<k$. 
This observation yields the following lemma.

\begin{lemma}\label{lem:separator}
Let $k$ be a positive integer, let $H$ be a connected graph with $\pw(H)\le k$, and let $z\in V(H)$.
Then there is a set $Y\subseteq V(H)$ such that
\begin{enumerate}
    \item $z \in Y$,
    \item $\pw(H-Y)\le k-1$, and
    \item $|Y \cap B_H(z,d)| \leq 2d+1$ 
    for every nonnegative integer $d$. 
\end{enumerate}
\end{lemma}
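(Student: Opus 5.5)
Take $Y$ to be the vertex set of a shortest path, or of two shortest paths glued at $z$; a shortest path meets any ball in few vertices, so property (iii) comes for free. Fix a path decomposition $(B_1,\dots,B_m)$ of $H$ of width at most $k$. We may assume every bag is nonempty, since empty bags can be deleted without breaking the interval property. Choose $a\in B_1$ and $b\in B_m$.

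Because $H$ is connected, there are shortest paths $P_a$ from $z$ to $a$ and $P_b$ from $z$ to $b$ in $H$. Set $Y=V(P_a)\cup V(P_b)$, which gives (i). The walk $P_a^{-1}P_b$ runs from $a$ to $b$ and contains a path $P$ from $a$ to $b$ with $V(P)\subseteq Y$. This path $P$ meets both $B_1$ and $B_m$.

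For (ii), invoke the folklore observation. It gives $\pw(H-V(P))<k$, and since $H-Y$ is a subgraph of $H-V(P)$, we get $\pw(H-Y)\le k-1$. If one wants to avoid the connectivity hypothesis inside the observation, the direct argument is short. Every bag $B_i$ contains a vertex of $P$: the vertices of $P$ occupy consecutive intervals, adjacent vertices share a bag, and the first interval contains $1$ while the last contains $m$, so together they cover $[m]$. Hence $(B_i\setminus V(P))_i$ is a path decomposition of $H-V(P)$ of width at most $k-1$. Restricting bags to $V(H)\setminus Y$ then gives a decomposition of $H-Y$ of width at most $k-1$. (If $H-Y$ is empty, the statement holds trivially under any convention.)

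For (iii), write $P_a=(z=x_0,x_1,\dots,x_\ell)$. Since $P_a$ is a shortest path starting at $z$, we have $\dist_H(x_j,z)=j$. So $P_a$ contributes exactly the vertices $x_0,\dots,x_{\min(d,\ell)}$ to $B_H(z,d)$, at most $d+1$ of them, and likewise for $P_b$. Both paths contain $z$, so $|Y\cap B_H(z,d)|\le (d+1)+(d+1)-1=2d+1$.

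The step needing the most care is (ii). It needs the covering argument: every bag meets $P$, which uses only that $P$ is connected and meets $B_1$ and $B_m$. It also needs a note that $k\ge1$ keeps the width bound meaningful. The rest is routine.
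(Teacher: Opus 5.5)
Your proof is correct and follows the paper's argument: you take $Y$ as the union of two geodesics from $z$ to the first and last bags, observe that every bag meets $Y$ because a connected vertex set touching $B_1$ and $B_m$ covers all indices, and note that each geodesic contributes at most $d+1$ vertices to $B_H(z,d)$. The differences are cosmetic: you fix endpoints $a\in B_1$ and $b\in B_m$ in advance rather than taking shortest paths to the bags themselves, and you route the covering argument through an explicit $a$--$b$ path $P$ with $V(P)\subseteq Y$ instead of using connectivity of $H[Y]$ directly.
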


\begin{proof}
Let $(B_1,\ldots,B_m)$ be a path decomposition of $H$ of width at most $k$.
Without loss of generality, we may assume that $B_1$ and $B_m$ are nonempty.
Let $Q_1$ be a shortest path in $H$ from $z$ to a vertex of $B_1$. 
Let $Q_m$ be a shortest path in $H$ from $z$ to a vertex of $B_m$. 
(Both paths exist since $H$ is connected.) 
Let $Y=V(Q_1)\cup V(Q_m)$. 
Clearly, $z\in Y$. 
In any path decomposition, 
the set of indices of bags meeting a connected subgraph is an interval.
Since $H[Y]$ is connected and meets both $B_1$ and $B_m$, every bag meets $Y$.
Thus, deleting $Y$ from every bag gives a path decomposition of $H-Y$ with bags of size at most $k$.  This proves $\pw(H-Y)\le k-1$.
Each of the two shortest paths contains at most $d+1$ vertices at distance at most $d$ from $z$, and both contain $z$.
Hence $|Y\cap B_H(z,d)|\le 2d+1$.
This completes the proof.
\end{proof}

A \defin{rooted tree} is a tree with a distinguished vertex, called its \defin{root}. 
For our purposes, a rooted tree is \defin{binary} if every vertex has at most two children.
For a rooted tree $T$ with root $r$, define $\defmath{\depth_T(v)}=\dist_T(r,v)$.
Let $q = 3 / 4$.
For each rooted tree $T$ and each subset $X \subset V(T)$, let 
\[\defmath{w(T,X)} = \sum_{v \in X} q^{\depth_T(v)}.\]
We also write $\defmath{w(T)} = w(T,V(T))$.

\begin{lemma}\label{lem:weight}
Let $T$ be a rooted binary tree and let $k$ be a nonnegative integer.
If $T$ is isomorphic to a subgraph of $H\boxtimes P$, where $P$ is a path and $\pw(H)\le k$, then
\[
    w(T)
    \le 388\cdot 583^k.
\]
\end{lemma}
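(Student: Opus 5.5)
We prove the bound by induction on $k$, writing $C_k = 388\cdot 583^k$. Fix an embedding $\varphi\colon V(T)\to V(H)\times V(P)$ of $T$ into $H\boxtimes P$, and write $\varphi(v)=(\varphi_H(v),\varphi_P(v))$.

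\emph{Preliminary reduction.} Since $T$ is connected and adjacent vertices of $H\boxtimes P$ have equal or adjacent $H$-coordinates, the set $\varphi_H(V(T))$ lies in a single component of $H$. We may therefore replace $H$ by that component, which does not increase pathwidth, and assume $H$ is connected.

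\emph{Key distance observation.} Along every edge of $T$, both $\varphi_H$ and $\varphi_P$ move by distance at most $1$. Let $r$ be the root of $T$. Then a vertex $v$ at depth $d$ satisfies
\[
\varphi_H(v)\in B_H(\varphi_H(r),d)
\quad\text{and}\quad
\dist_P\bigl(\varphi_P(v),\varphi_P(r)\bigr)\le d .
\]

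\emph{Base case $k=0$.} A connected graph of pathwidth $0$ is a single vertex, so $T$ is a subgraph of $P$ and hence a path. The root splits it into at most two branches, and each branch has at most one vertex at each depth $d\ge 1$. Therefore $w(T)\le 1+2\sum_{d\ge1}q^d=7\le C_0$.

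\emph{Inductive step, $k\ge1$.} Let $z=\varphi_H(r)$ and apply \cref{lem:separator} to get a set $Y$. Let $R=\{v\in V(T):\varphi_H(v)\in Y\}$; note $r\in R$ since $z\in Y$.

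First we bound the weight of $R$. By the distance observation, the vertices of $R$ at depth $d$ map injectively into
\[
\bigl(Y\cap B_H(z,d)\bigr)\times\{x\in V(P):\dist_P(x,\varphi_P(r))\le d\},
\]
which has at most $(2d+1)(2d+1)$ elements by \cref{lem:separator}(iii). Hence
\[
w(T,R)\le \sum_{d\ge0}(2d+1)^2q^d = 4\cdot 84+4\cdot 12+4 = 388,
\]
using $\sum q^d=4$, $\sum dq^d=q/(1-q)^2=12$ and $\sum d^2q^d=q(1+q)/(1-q)^3=84$ for $q=3/4$. This explains the constant $388$.

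Next we bound the rest of the tree. Consider the components of $T-R$, each rooted at its vertex closest to $r$. Each is a rooted binary tree, embedded via $\varphi$ into $(H-Y)\boxtimes P$, and $\pw(H-Y)\le k-1$. Since $r\in R$, the root of each component is a child of some vertex of $R$. If that root has depth $d+1$ in $T$, then the component contributes $q^{d+1}$ times its own weight, which is at most $q^{d+1}C_{k-1}$ by induction. Each vertex of $R$ has at most two children, so summing over components gives
\[
w(T,V(T)\setminus R)\le 2q\,C_{k-1}\sum_{v\in R}q^{\depth_T(v)}\le 2\cdot\tfrac34\cdot 388\,C_{k-1}=582\,C_{k-1}.
\]

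Combining the two bounds,
\[
w(T)\le 388+582\cdot388\cdot583^{k-1}\le 388\cdot583^{k}=C_k .
\]

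The main point requiring care is the counting step for $R$. It needs both \cref{lem:separator}(iii) for the $H$-coordinate and the path-distance bound for the $P$-coordinate, and the weight must be measured by depth in $T$ rather than in the subtrees. Once that is done, the recursion closes exactly with the stated constants.
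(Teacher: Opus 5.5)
Your proof is correct and follows essentially the same route as the paper. Both arguments use induction on $k$ and take the set $Y$ from \cref{lem:separator}, centered at the root's image. The $(2d+1)^2$ count then gives $w(T,R)\le 388$, and charging each component of $T-R$ to its parent in $R$ gives the factor $2q$. The only cosmetic difference is that you substitute $w(T,R)\le 388$ before combining, whereas the paper carries $w(T,S)$ as a factor until the last line.
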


\begin{proof}
%Set $q = 3/4$.
The proof is by induction on $k$.
Assume that $T$ is isomorphic to a subgraph of $H\boxtimes P$ where $P$ is a path and $\pw(H)\le k$.
Say that each $v \in V(T)$ is mapped by this isomorphism to $(\pi(v),\lambda(v)) \in V(H) \times V(P)$.
Restricting $H$ to the subgraph induced by the first coordinates used by $T$, we may assume that $H$ is connected.
% For every edge $uv\in E(T)$,
% \[
%     \dist_H(\pi(u),\pi(v))\le 1
%     \ \ \text{ and } \ \
%     |\lambda(u)-\lambda(v)|\le 1.
% \]

If $k=0$, then $H$ has one vertex, so $T$ is a subgraph of a path.
There are at most two vertices of $T$ at each positive depth, and therefore
\[
    w(T)\le 1+2\sum_{d=1}^\infty q^d=7\le 388.
\]

Now let $k\ge 1$, let $r$ be the root of $T$, and apply \cref{lem:separator} to $H$ with $z=\pi(r)$.
Let $Y$ be the resulting set and let
\[
    S=\{v\in V(T):\pi(v)\in Y\}.
\]
Since $z=\pi(r)\in Y$, we have $r \in S$.
Note that for $v \in S$, if $d = \depth_T(v)$, then 
$\dist_H(\pi(r),\pi(v)) \leq d$ and $\dist_P(\lambda(r),\lambda(v)) \leq d$. Equivalently, $\pi(v) \in Y \cap B_H(z,d)$ and $\lambda(v) \in B_P(\lambda(r),d)$.
We have $|Y \cap B_H(z,d)| \leq 2d+1$ and $|B_P(\lambda(r),d)| \leq 2d+1$.
In particular, there are at most $2d+1$ choices for each coordinate, so injectivity of the embedding gives
\[
    |\{v\in S:\depth_T(v)=d\}|\le (2d+1)^2.
\]
Consequently,
\[
    w(T,S)
    \le \sum_{d=0}^\infty (2d+1)^2q^d
    =\frac{1+6q+q^2}{(1-q)^3}
    =388.
\]

Let $\calC$ be the set of components of $T - S$. 
For every $C \in \mathcal{C}$, we proceed as follows. 
Let $r_C$ be the vertex in $C$ of minimum depth in $T$.
We consider $C$ to be rooted at $r_C$.
Since $r \in S$, we have $r_C \neq r$, and so  the parent of $r_C$ in $T$ is in $S$.
Moreover, for every $v \in V(C)$, we have $\depth_T(v) = \depth_T(r_C) + \depth_C(v)$.
In particular,
\begin{equation}
w(T,V(C)) = q^{\depth_T(r_C)} \cdot w(C).
\label{eq:w(T,C)}
\end{equation}
%Since a connected subgraph of a tree contains the path between each pair of its vertices, $C$ has a unique vertex $c$ of minimum depth. %: otherwise the least common ancestor of two such vertices would be a shallower vertex of $C$.
%The parent $s$ of $c$ belongs to $S$, and the same argument shows that $c$ is an ancestor of every vertex of $C$.
%Thus $C$, rooted at $c$, is again a binary tree.
The first coordinates used by $C$ induce a connected subgraph of $H-Y$. This subgraph has pathwidth at most $k-1$. 
By induction,
\begin{equation}
    w(C) \leq 388\cdot 583^{k-1}.
\label{eq:induction-call}
\end{equation}
For every $v \in S$, we define
\[Z_v = \{C \in \calC : \text{$r_C$ is a child of $v$}\}.\]
Since $T$ is a binary tree, for every $v \in S$, we have $|Z_v| \leq 2$.
As discussed before, 
\begin{equation}\textstyle
\bigcup_{v \in S} Z_v = \calC. 
\label{eq:partition-of-C}
\end{equation}
Therefore,
\begin{align*}
    w(T,V(T) \setminus S) &= 
    \sum_{C\in\mathcal{C}} w(T,V(C))\\
    &= 
    \sum_{C\in\mathcal{C}} q^{\depth_T(r_C)}\cdot w(C)&&\textrm{by~\eqref{eq:w(T,C)}}\\
    &= \sum_{v \in S} \sum_{C \in Z_v} q^{\depth_T(v)+1} \cdot w(C)
    &&\textrm{by~\eqref{eq:partition-of-C}}\\
    &\leq \sum_{v \in S} 2q \cdot q^{\depth_T(v)} \cdot (388\cdot 583^{k-1}) &&\textrm{by~\eqref{eq:induction-call}}\\
    &\leq 2q \cdot w(T,S) \cdot (388\cdot 583^{k-1}) \leq 582\cdot 583^{k-1} \cdot w(T,S).
\end{align*}
Summarizing, we have
\begin{align*}
    w(T) &= w(T,V(T) \setminus S) + w(T,S) \leq w(T,S) \cdot (582\cdot 583^{k-1} + 1) \le 388 \cdot 583^k. 
\end{align*}
This completes the proof of the lemma.
\end{proof}

\begin{proof}[Proof of \cref{thm:main}]
Let $h$ be a nonnegative integer and let $k = \rpw(T_h)$.
Choose $H$ with $\pw(H)=k$ and a path $P$ such that $T_h$ is isomorphic to a subgraph of $H\boxtimes P$.
The tree $T_h$ has $2^h$ leaves, all at depth $h$, hence
\[w(T_h) \geq 2^h \cdot q^{h} = (3/2)^h.\]
By \cref{lem:weight},
\[
    (3/2)^h \leq w(T_h) \le 388\cdot 583^k.
\]
Since
\[
    388<(3/2)^{15}
    \ \ \text{ and } \ \
    583< (3/2)^{16},
\]
we obtain $h<15+16k$, and therefore $k\ge \left\lfloor(h+1)/16\right\rfloor$.
% For completeness, let $p_h:=\pw(T_h)$.  For $h\ge 2$, choose a leaf-to-leaf path $Q$ through the root.  Every component of $T_h-V(Q)$ is a complete binary tree of height at most $h-2$.  Take path-decompositions of these components of width at most $p_{h-2}$, add their attachment vertex in $Q$ to every corresponding bag, concatenate the decompositions in the order of their attachment vertices along $Q$, and insert a two-vertex bag for every edge of $Q$.  This gives
% \[
%     p_h\le p_{h-2}+1.
% \]
% Since $p_0=0$ and $p_1=1$, we have $p_h\le\lceil h/2\rceil$.  Finally, $\rpw(G)\le\pw(G)$ for every graph $G$, by placing all vertices in one $P_\infty$-coordinate.  Hence
% \[
%     \rpw(T_h)\le\left\lceil\frac h2\right\rceil.
% \]
\end{proof}

\section*{Statement of AI use}
The proof was found by OpenAI's GPT-5.6 Sol Pro. 
The authors take responsibility for the
mathematical correctness of the presented arguments.

% ----------------------------
\bibliographystyle{abbrv}
\bibliography{bibliography}
\end{document}